\newtheorem{theorem}{Theorem}[section]
\theoremstyle{definition}
\newtheorem{definition}[theorem]{Definition}
\newtheorem{example}[theorem]{Example}
\theoremstyle{remark}
\newtheorem{remark}{Remark}
\numberwithin{figure}{section}%figure numbering
\newcommand{\RR}    {\Bbb{R}}
\newcommand{\id}    {\mathrm{id}}
\newcommand{\im}    {\mathrm{Im}}
\newcommand{\cat}   {\mathrm{cat}}
\newcommand{\scat}    {\mathrm{scat}}
\newcommand{\st}    {\mathrm{st}}
\newcommand{\lk}    {\mathrm{lk}}
\newcommand{\scrit}    {\mathrm{scrit}}
\newcommand{\Crit}    {\mathrm{crit}}
\title[A discrete version of the L--S Theorem]{Strong discrete Morse theory and simplicial L--S category: A discrete version of the Lusternik-Schnirelmann Theorem.}          
\author[D. Fdez-Ternero \and E. Mac\'{\i}as-Virg\'os \and N. A. Scoville \and J. A. Vilches]{D. Fern\'{a}ndez-Ternero \and E. Mac\'{\i}as-Virg\'os \and N. A. Scoville \and J. A. Vilches}   
\thanks{The first and the fourth authors were partially supported by MINECO and FEDER Research Project MTM2015-65397-P and Junta de Andaluc\'{\i}a Research Groups FQM-326 and FQM-189. The second  author was partially supported by MINECO and FEDER Research Project MTM2016-78647-P}
\date{\today}
\begin{document}

\begin{abstract}
We prove a discrete version of the Lusternik--Schnirelmann theorem
for discrete Morse functions and the recently introduced
simplicial Lusternik--Schnirelmann category of a simplicial
complex. To accomplish this, a new notion of critical object of a discrete
Morse function is presented, which generalizes the usual concept
of critical simplex (in the sense of R. Forman). We show that
the non-existence of such critical objects guarantees the strong
homotopy equivalence (in the Barmak and Minian's sense) between
the corresponding sublevel complexes. Finally, we establish that
the number of critical objects of a discrete Morse function
defined on $K$ is an upper bound for the non-normalized simplicial
Lusternik--Schnirelmann category of $K$.
\end{abstract}

\keywords{Simplicial Lusternik-Schnirelmann category, strong collapsibility, discrete Morse theory, strong homotopy type}               % Keywords
\subjclass[2010]{
55U05, 
57M15, 
55M30} 

\maketitle

\setcounter{tocdepth}{1}
\tableofcontents

\section{Introduction}

Since its inception by Robin Forman \cite{F-98}, discrete Morse
theory has been a powerful and versatile tool used not only in
diverse fields of mathematics, but also in applications to other
areas \cite{Real-15} as well as a computational tool \cite{C-G-N2016}. Its
adaptability stems in part from the fact that it is a discrete
version of the beautiful and successful ``smooth'' Morse theory
\cite{M-63}. While smooth Morse theory touches many branches of
math, one such area that has its origins in critical point theory
is that of Lusternik--Schnirelmann category.  The (smooth)
Lusternik--Schnirelmann category or L--S category of a smooth
manifold $X$, denoted $\cat(X)$, was first introduced in
\cite{L-S}, where the authors proved what is now known as the
Lusternik--Schnirelmann Theorem (see also \cite{CLOT} for a
detailed survey of the topic). A version of this result can be
stated as follows:

\begin{theorem} If $M$ is a compact smooth manifold and $f\colon M \to \RR$ is a smooth map, then
$$\cat(M)+1\leq \sharp(\Crit(f))$$

where $\Crit{(f)}$ is the set of all critical points of the
function $f$.

\end{theorem}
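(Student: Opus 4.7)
The plan is to work with the open-cover characterization of Lusternik--Schnirelmann category: $\cat(M)+1$ is the minimum cardinality of an open cover of $M$ by subsets each contractible in $M$. I will construct such a cover of cardinality at most $\sharp(\Crit(f))$ (one may assume $\sharp(\Crit(f))$ is finite, else the inequality is trivial). Fix a Riemannian metric on $M$ so that the negative gradient flow $\Phi_t$ of $f$ is defined, and filter $M$ by the sublevel sets $M^{a}:=f^{-1}((-\infty,a])$. Let $c_1<c_2<\cdots<c_k$ be the distinct critical values of $f$ and $n_j$ the number of critical points at level $c_j$, so that $\sharp(\Crit(f))=\sum_{j} n_j$.

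The core argument is an induction on $j$ establishing
\[
\cat(M^{c_j+\epsilon})+1 \;\leq\; \sum_{i\leq j} n_i
\]
for sufficiently small $\epsilon>0$. For the base case $j=1$, the set $M^{c_1+\epsilon}$ is a disjoint union of $n_1$ small contractible neighborhoods of the local minima, giving the bound immediately. For the inductive step, two standard Morse-theoretic facts are invoked: (i) when $[a,b]$ contains no critical values, $\Phi_t$ supplies a deformation retract of $M^{b}$ onto $M^{a}$; and (ii) at each critical level $c_j$, the sublevel set $M^{c_j+\epsilon}$ is homotopy equivalent to $M^{c_j-\epsilon}$ with one cell attached per critical point at $c_j$, each cell contained in a contractible coordinate chart $W_p$ about the corresponding critical point $p$. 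Combining a slight thickening of a categorical open cover of $M^{c_{j-1}+\epsilon}$ (transported into $M^{c_j+\epsilon}$ via the flow retract) with the $n_j$ neighborhoods $W_p$ produces an open cover of $M^{c_j+\epsilon}$ whose members are all contractible in $M$ (the new $W_p$ trivially, since they are contractible in themselves). Taking $j=k$ yields $\cat(M)+1 \leq \sharp(\Crit(f))$.

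The main obstacle is the inductive step, namely rigorously carrying an open cover across a critical level: one has to control how the open sets interact with the ascending/descending manifolds of the critical points, thicken them compatibly with the handle-attachment picture, and ensure that the transported sets remain contractible within $M$ rather than merely within the sublevel set. A secondary subtlety arises if $f$ is only smooth, not Morse. Perturbing $f$ to a Morse function is the wrong direction, since it can only increase the number of critical points and thereby weaken the bound; instead, one treats isolated degenerate critical points directly, replacing the Morse chart $W_p$ by any contractible open neighborhood of $p$ and using the long-time behavior of $\Phi_t$ (which, by compactness of $M$, drives every trajectory outside $\bigcup_p W_p$ into $\bigcup_p W_p$ in uniformly bounded time) to promote each $W_p$ to an open basin $U_p$ that deformation retracts back onto $W_p$, hence is contractible in $M$.
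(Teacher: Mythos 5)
This statement is the classical smooth Lusternik--Schnirelmann theorem, which the paper quotes in its introduction purely as motivation and does not prove; it is cited from \cite{L-S} and \cite{CLOT}, and the paper's actual content is the discrete analogue (Theorem \ref{sLS theorem}), proved by an entirely different, combinatorial induction over sublevel complexes and strong collapses. So there is no in-paper argument to compare yours against; what you have written is the standard smooth proof strategy from the literature (essentially Theorem 1.15 of \cite{CLOT}): filter by sublevel sets, flow down between critical levels, and add one categorical set per critical point when crossing a critical level. That overall architecture is correct, and you rightly observe that perturbing to a Morse function goes the wrong way for this inequality.

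Two points in your outline need repair. First, the base case: for a degenerate isolated minimum the component of $M^{c_1+\epsilon}$ around it need not be contractible; what saves you is that for small $\epsilon$ each component lies inside a coordinate ball, hence is contractible \emph{in} $M$, which is all that category requires --- say that rather than ``disjoint union of contractible neighborhoods.'' Second, and more seriously, the claim that the basin $U_p=\{x:\Phi_t(x)\in W_p\ \text{for some}\ t\geq 0\}$ ``deformation retracts back onto $W_p$'' does not follow: the entry time into $W_p$ is not a continuous function of the initial point (trajectories can graze $\partial W_p$, and nearby trajectories may enter at wildly different times or not at all), so no deformation retraction is available this way. The standard fix is to avoid basins entirely and invoke the Lusternik--Schnirelmann deformation lemma: for any open neighborhood $N=\bigcup_p W_p$ of the critical set at level $c_j$ there is $\epsilon>0$ and a deformation of $M^{c_j+\epsilon}\setminus N'$ into $M^{c_j-\epsilon}$ (for a slightly shrunken $N'\subset N$), proved by the uniform lower bound on $|\nabla f|$ on the compact set $f^{-1}([c_j-\epsilon,c_j+\epsilon])\setminus N'$. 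One then takes as new categorical sets the $W_p$ themselves together with the preimages, under that deformation, of a categorical cover of $M^{c_j-\epsilon}$. With those corrections your argument closes; as it stands, the inductive step rests on an unjustified retraction.
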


There are many ``smooth'' versions of this theorem in various
contexts (see for example \cite{Palais-66}). The aim of this paper
is to view Forman's discrete Morse theory from a different
perspective in order to prove a discrete version of the L--S theorem
compatible with the recently defined simplicial L--S category
developed by three of the authors \cite{F-M-V}. This simplicial
version of L--S category is suitable for simplicial complexes. Other
attempts have been made to develop such a ``discrete'' L--S
category.  In \cite{AS}, one of the authors developed a discrete
version of L--S category and proved an analogous L--S theorem for
discrete Morse functions. Our version of the L--S Theorem, Theorem
\ref{sLS theorem}, relates a new generalized notion of critical
object of a discrete Morse function to the simplicial L--S category
of \cite{F-M-V}.

In this paper we use the notion of simplicial L--S category defined
in {\cite{F-M-V}}. This simplicial approach of L--S category uses
strong collapses in the sense of Barmak and Minian \cite{B-M-12}
as a framework for developing categorical sets.  As opposed to
standard collapses, strong collapses are natural to consider
in the simplicial setting since they correspond to simplicial maps
(see Figure~\ref{strong_collapse}).

\begin{figure}[htbp]
\begin{center}
\includegraphics[width=.5\linewidth]{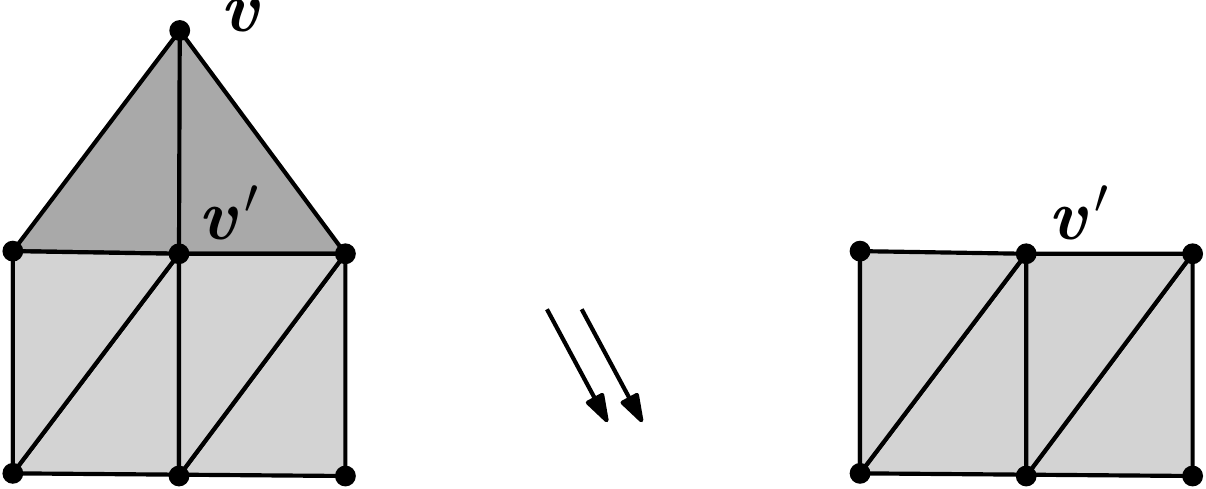}
\caption{An elementary strong collapse from $K$ to $K-\{v\}$.}
\label{strong_collapse}
\end{center}
\end{figure}

This is especially contrasted with (standard) elementary collapses, which in
general do not correspond to simplicial maps (see
Figure~\ref{standard_collapse}).

\begin{figure}[htbp]
\begin{center}
\includegraphics[width=.5\linewidth]{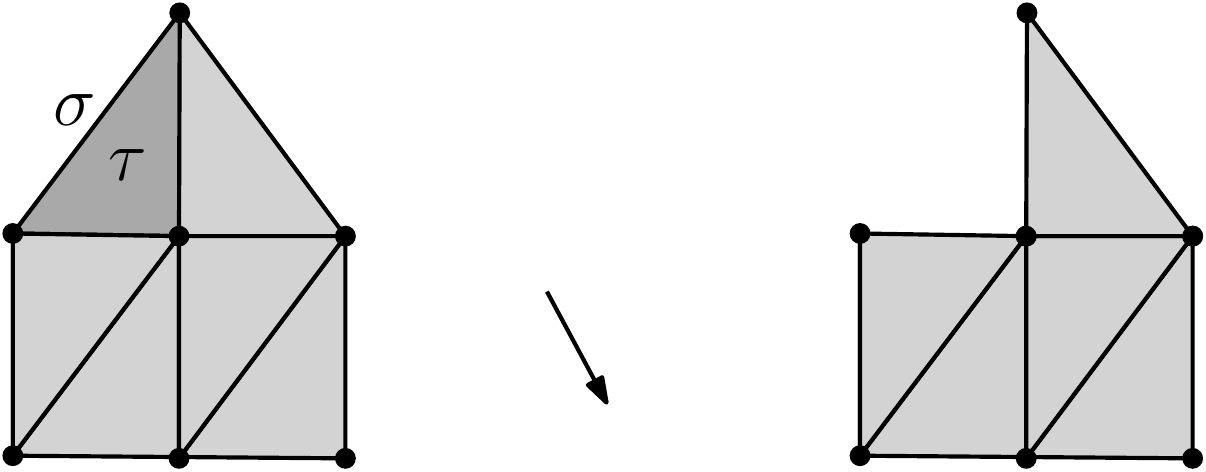}
\caption{An elementary (standard) collapse from $K$ to $K-\{\sigma , \tau\}$.}
\label{standard_collapse}
\end{center}
\end{figure}

Furthermore, elementary strong collapses correspond to the
deletion of the open star of a dominated vertex. Notice that in
general, the deletion of a vertex is not a simplicial map.

\begin{figure}[htbp]
\begin{center}
\includegraphics[width=.6\linewidth]{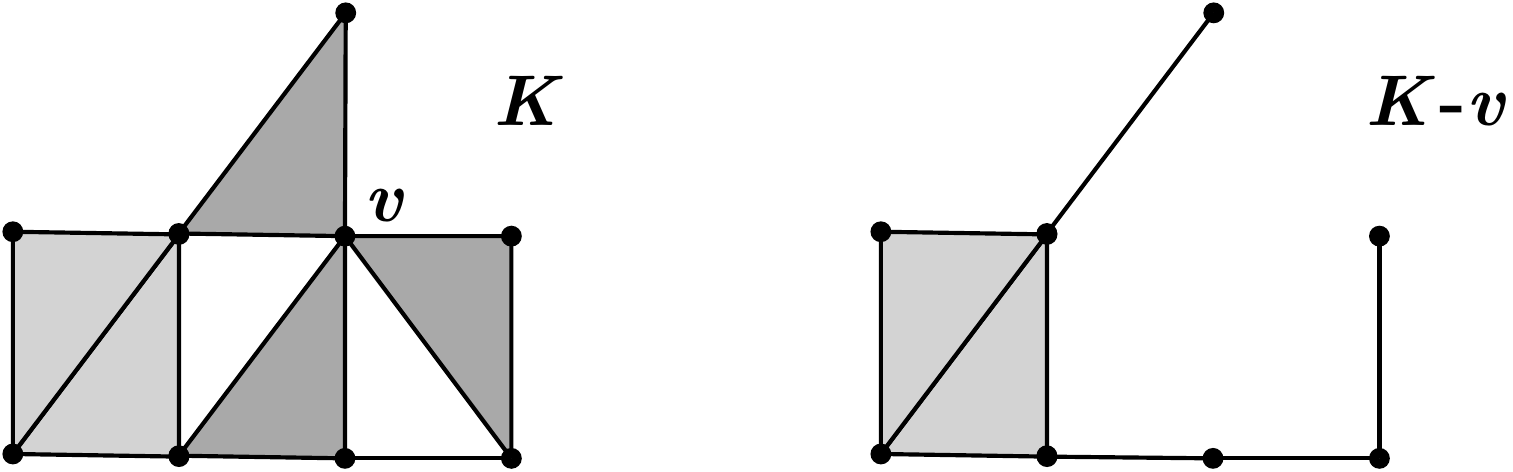}
\caption{Deletion of a vertex that does not correspond to a simplicial map.}
\label{vertex_deletion}
\end{center}
\end{figure}

This paper is organized as follows.  Section \ref{Fundamentals of
simplicial complexes} contains the necessary background and basics
of simplicial complexes and collapsibility. Section \ref{Strong
discrete Morse theory} is devoted to both reviewing discrete Morse
theory and introducing a generalized notion of critical object in
this context. Here we develop a collapsing theorem for discrete
Morse functions which is analogous to the classical result of
Forman (Theorem 3.3 of \cite{F-98}).

Section \ref{simplicial Lusternik--Schnirelmann category} is the
heart of the paper. In this section, we recall the definitions and
basic properties of the simplicial L--S category and prove the
simplicial L--S theorem in Theorem \ref{sLS theorem}. The rest of
the section is devoted to examples and immediate applications.

\section{Fundamentals of simplicial complexes}\label{Fundamentals of simplicial complexes}

In this section we review some of the basics of simplicial
complexes (see \cite{Munkres} for a more detailed exposition). Let
us start with the definition of simplicial complex. The more usual
way to introduce this notion is to do it geometrically. First, let
us introduce the basic building blocks, that is, the notion of a
simplex. Given $n+1$ points $v_0,\dots,v_n$ in general position in
an Euclidean space, the $n$-simplex $\sigma$ generated by them,
$\sigma=(v_0,\dots,v_n)$, is defined as their convex hull. A
simplex $\sigma$ contains lower dimensional simplices $\tau$,
denoted by $\tau\leq \sigma$, called faces just by considering the
corresponding simplices generated by any subset of its vertices. A
simplicial complex is a collection of simplices satisfying two
conditions:

\begin{itemize}

\item Every face of simplex in a complex is also a simplex of the complex.

\item If two simplices in a complex intersect, then the intersection is also a simplex of the complex.

\end{itemize}

Alternatively, it is possible to define a simplicial complex
abstractly. It will avoid confusion when
parts of the simplicial structure are removed.

Given a finite set $[n]:=\{0,1,2,3,\ldots, n\}$, an \emph{abstract simplicial complex $K$ on $[n]$} is a collection of subsets of $[n]$ such that:

\begin{itemize}
\item If $\sigma \in K$ and $\tau \subseteq \sigma$, then $\tau \in K$.

\item $\{i\} \in K$ for every $i\in [n]$.

\end{itemize}

The set $[n]$ is called the \emph{vertex set} of $K$ and the elements $\{i\}$ are called \emph{vertices} or \emph{$0$-simplices.}   We also may write $V(K)$ to denote the vertex set of $K$.

An element $\sigma \in K$ of cardinality $i+1$ is called an \emph{$i$-dimensional simplex} or \emph{$i$-simplex} of $K$.   The \emph{dimension} of $K$, denoted $\dim(K)$, is the maximum of the dimensions of all its simplices.

\begin{example}  Let $n=5$ and $V(K):=\{0,1,2,3,4,5\}$.\par  \noindent Define $K:=\{\{0\},\{1\}, \{2\}, \{3\}, \{4\}, \{5\},  \{1,2\}, \{1,4\}, \{2,3\},
 \{1,3\}, \{3,4\}, \{3,5\},$\par \hspace*{1.55cm}$\{4,5\}, \{3,4,5\}, \emptyset\}$.\par
 This abstract simplicial complex may be regarded geometrically as follows:

\begin{center}
\includegraphics[width=.3\linewidth]{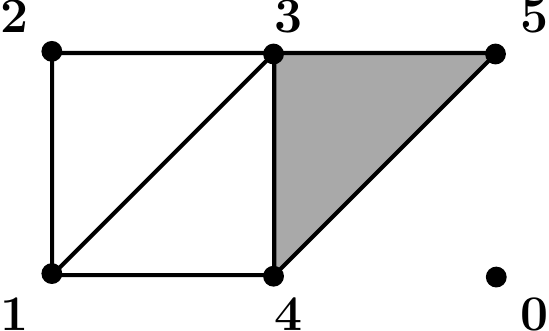}
%\caption{Geometric version of the abstract simplicial complex $K$.}
\end{center}

%From now on, when applicable, we will \ja{represent a simplicial complex through such geometrical way.}
\end{example}

Further definitions are in order.

\begin{definition}  A \emph{subcomplex} $L$ of $K$, denoted $L\subseteq K$, is a subset $L$ of $K$ such that $L$ is also a simplicial complex.
\end{definition}

We use $\sigma^{(i)}$ to denote a simplex of dimension $i$, and we write $\tau < \sigma^{(i)}$ to denote any subsimplex of $\sigma$ of dimension strictly less than $i$. If $\sigma, \tau\in K$ with $\tau < \sigma$, then $\tau$
is a \emph{face} of $\sigma$ and $\sigma$ is a \emph{coface} of $\tau$.  A simplex of $K$ that is not properly contained in any other simplex of $K$ is called a \emph{facet} of $K$.\\

At this point we recall a key concept in simple-homotopy theory: the notion of simplicial collapse \cite{C-73}.

\begin{definition}  Let $K$ be a simplicial complex and suppose that there is a pair of simplices $\sigma^{(p)}<\tau^{(p+1)}$ in $K$ such that $\sigma$ is a face of $\tau$ and $\sigma$ has no other cofaces.
Such a pair $\{\sigma, \tau\}$ is called a \emph{free pair.}. Then
$K-\{\sigma,\tau\}$ is a simplicial complex called an
\emph{elementary collapse} of $K$ (see
Figure~\ref{standard_collapse}).  The action of collapsing is
denoted $K \searrow K-\{\sigma, \tau\}$.

More generally, $K$ is said to \emph{collapse} onto $L$ if $L$ can
be obtained from $K$ through a finite series of elementary
collapses, denoted $K \searrow L$. In the case where $L=\{v\}$ is
a single vertex, we say that $K$ is  \emph{collapsible}.
\end{definition}

Now we introduce some basic subcomplexes related to a vertex. They play analogous role in the simplicial setting as the closed ball, sphere, and open ball play in the continuous approach.

\begin{definition} Let $K$ be a simplicial complex and let $v\in K$ be a vertex.

The \emph{star of $v$ in $K$}, denoted $\st(v)$, is the subcomplex of simplices $\sigma \in K$ such that $\sigma\cup \{v\}\in K$.

The \emph{link of $v$ in $K$}, denoted $\lk(v)$, is the subcomplex of $\st(v)$ of simplices which do not contain $v$.

The \emph{open star of $v$ in $K$} is $\st^o(v):=\st(v)-\lk(v)$. Note that $st^o(v)$ is not a simplicial subcomplex.

Finally, given a
simplicial complex $K$ and a vertex $v\notin K$, the \emph{cone} $vK$ is
the simplicial complex whose simplices are $\{v_0,\dots,v_n\}$ and $\{v,v_0,\dots,v_n\}$
where $\{v_0,\dots,v_n\}$ is any simplex of $K$.
\end{definition}

It is important to point out that simplicial collapses are not
simplicial maps in general. This suggests that it is natural to
consider a special kind of collapse which is a simplicial map. The
notion of strong collapse, introduced by Barmak and Minian in
\cite{BARMAK2011,B-M-12}, satisfies this requirement.

\begin{definition} Let $K$ be a simplicial complex and suppose there exists a pair of vertices $v,v^\prime\in K$ such that every
maximal simplex containing $v$ also contains $v^\prime$. Then we
say that $v^\prime$ \emph{dominates} $v$ and $v$ is
\emph{dominated by} $v^\prime$.

If $v$ is dominated by $v^\prime$ then the inclusion $i: K - \{v\}
\to K$ is a strong equivalence. Its homotopical inverse is the
retraction $r \colon K \to K - \{v\}$ which is the identity on $K
- \{v\}$ and such that $r(v) = v^\prime$. This retraction is
called an \emph{elementary strong collapse} from $K$ to $K -
\{v\}$, denoted by $K \searrow\searrow K - \{v\}$.

A \emph{strong collapse} is a finite sequence of elementary strong
collapses. The inverse of a strong collapse is called a
\emph{strong expansion} and two complexes $K$ and $L$ \emph{have
the same strong homotopy type} if there is a sequence of strong
collapses and strong expansions that transform $K$ into $L$.
\end{definition}

See Figure~\ref{strong_collapse} to illustrate the above notions.

Equivalently, $v$ is a dominated vertex if and only if its link is
a cone \cite{BARMAK2011}.

\section{Strong discrete Morse theory}\label{Strong discrete Morse theory}

We are now ready to introduce a key object of our study, that is,
discrete Morse functions in the R. Forman's sense \cite{F-98}.
More precisely, we are interested in a generalized notion
of critical object suitable for codifying the strong homotopy type
of a complex.

\begin{definition}  Let $K$ be a simplicial complex. A \emph{discrete Morse function} $f\colon K \to \RR$ is a function satisfying for any $p$-simplex
$\sigma\in K$:
\begin{enumerate}
\item[(M1)] $\sharp\left(\{\tau^{(p+1)}>\sigma : f(\tau)\leq f(\sigma)\}\right) \leq 1$.
\item[(M2)] $\sharp\left(\{\upsilon^{(p-1)}<\sigma : f(\upsilon)\geq f(\sigma)\}\right) \leq 1$.
\end{enumerate}
\end{definition}

Roughly speaking, we can say that it is a weakly increasing
function which satisfies the property that if $f(\sigma)=f(\tau)$,
then one of both simplices is a maximal coface of the other one.
%either $\sigma \leq \tau$ or $\tau \leq \sigma$.

\begin{definition}
A \emph{critical simplex} of $f$ is a simplex $\sigma$ satisfying:

\begin{enumerate}
\item[(C1)] $\sharp\left(\{\tau^{(p+1)}>\sigma : f(\tau)\leq f(\sigma)\}\right) =0$.
\item[(C2)] $\sharp\left(\{\upsilon^{(p-1)}<\sigma : f(\upsilon)\geq f(\sigma)\}\right)=0$.
\end{enumerate}
\end{definition}

If $\sigma$ is a critical simplex, the number $f(\sigma)\in \RR$ is called a \emph{critical value}. Any simplex that is not critical is called a \emph{regular simplex} while any output value of the discrete Morse function
which is not a critical value is a \emph{regular value}. The set of critical simplices of $f$ is denoted by $\Crit(f)$.\\

Given any real number $c$, the \emph{level subcomplex} of $f$ at
level $c$, $K(c)$, is the subcomplex of $K$ consisting of all
simplices $\tau$ with $f(\tau)\leq c$, as well as all of their
faces, that is,

$$ K(c)=\bigcup_{f(\tau)\leq c}\bigcup_ {\sigma \leq \tau} \sigma .$$

Any discrete Morse function induces a gradient vector field.

\begin{definition}\label{GVF}  Let $f$ be a discrete Morse function on $K$.  The \emph{induced gradient vector field} $V_f$ or $V$ when the context is clear, is defined by the following set of pairs of simplices:
$$V_f:=\{(\sigma^{(p)}, \tau^{(p+1)}) : \sigma<\tau, f(\sigma)\geq f(\tau)\}.
$$
\end{definition}

Note that critical simplices are easily
identified in terms of the gradient field as precisely
those simplices not contained in any pair in the gradient field.

\begin{definition}\label{min outside} Let $f\colon K\to \RR$ be a discrete Morse function and $V_f$ its induced gradient vector field.  For each vertex/edge pair
$(v,uv)\in V_f$, write $\mathrm{St}(v,u):=\st^o(v)\cap \st(u)$.
Define $m_v:=\min\{f(\tau): f(\tau)>f(uv), \tau \in
(K-\mathrm{St}(v,u))\bigcup (\Crit(f)\cap \mathrm{St}(v,u) )\}$.
\end{definition}

\begin{example}\label{cutoff illustration_a}
We illustrate Definition \ref{min outside}.  Let $K$ be the simplicial complex with
discrete Morse function $f$ given in Figure~\ref{DMT_example1a}.

\begin{figure}[htbp]
\begin{center}
\includegraphics[width=.95\linewidth]{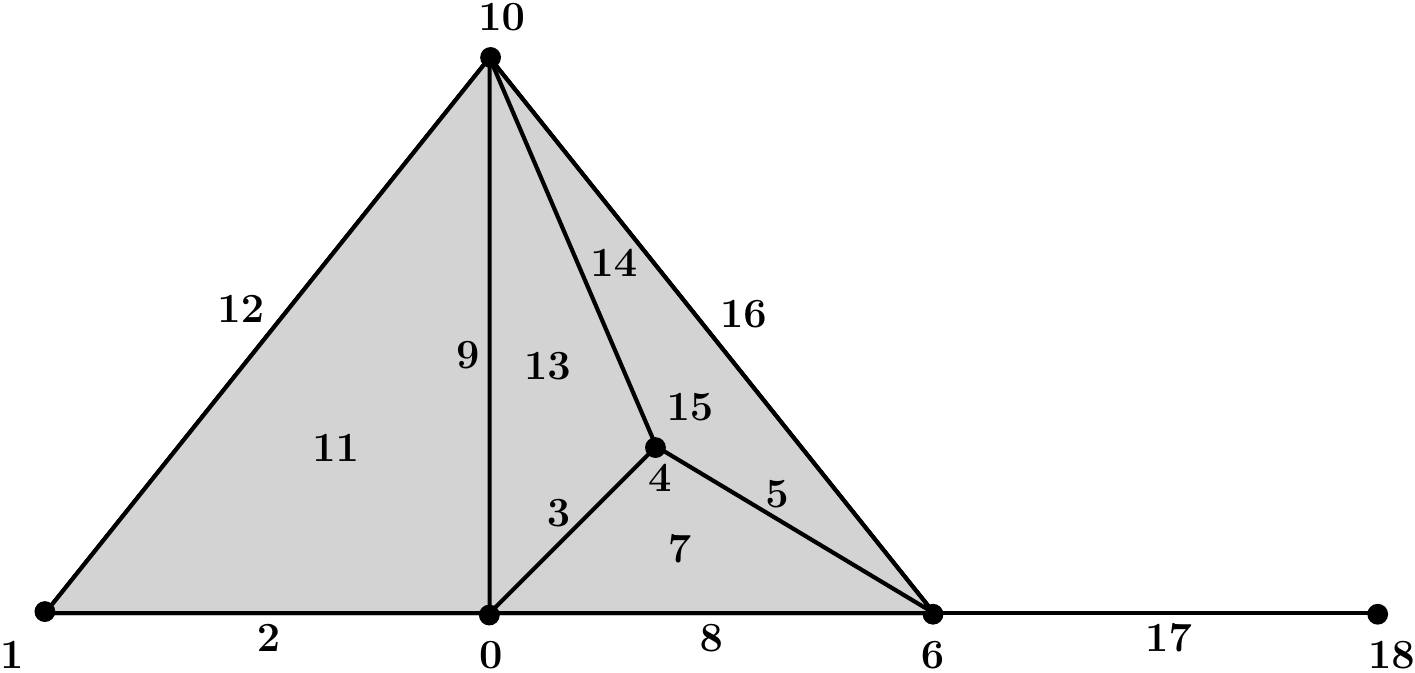}
\caption{A 2-dimensional simplicial complex with a discrete Morse
function.} \label{DMT_example1a}
\end{center}
\end{figure}

Observe that taking $v=f^{-1}(10)$ and $u=f^{-1}(0)$, we have
$(v,uv)\in V_f$ and
$$\mathrm{St}(v,u)=f^{-1}(\{9,10,11,12,13,14\})\,.$$ Since $\mathrm{St}(v,u)$ in Example~\ref{cutoff illustration_a} does not contain any
critical simplex, then determining $m_v$ consists on finding the
smallest value greater than $f(uv)$ outside of $\mathrm{St}(v,u)$.
In this case, $m_v=15$.
\end{example}

\begin{example}\label{cutoff illustration_b}
Now $f$ will be slightly modified to create a new discrete Morse
function $g$ on $K$ (Figure~\ref{DMT_example1b}). It is clear that $(v,uv)\in V_g$, but in this case $m_v=11$.

\begin{figure}[htbp]
\begin{center}
\includegraphics[width=.95\linewidth]{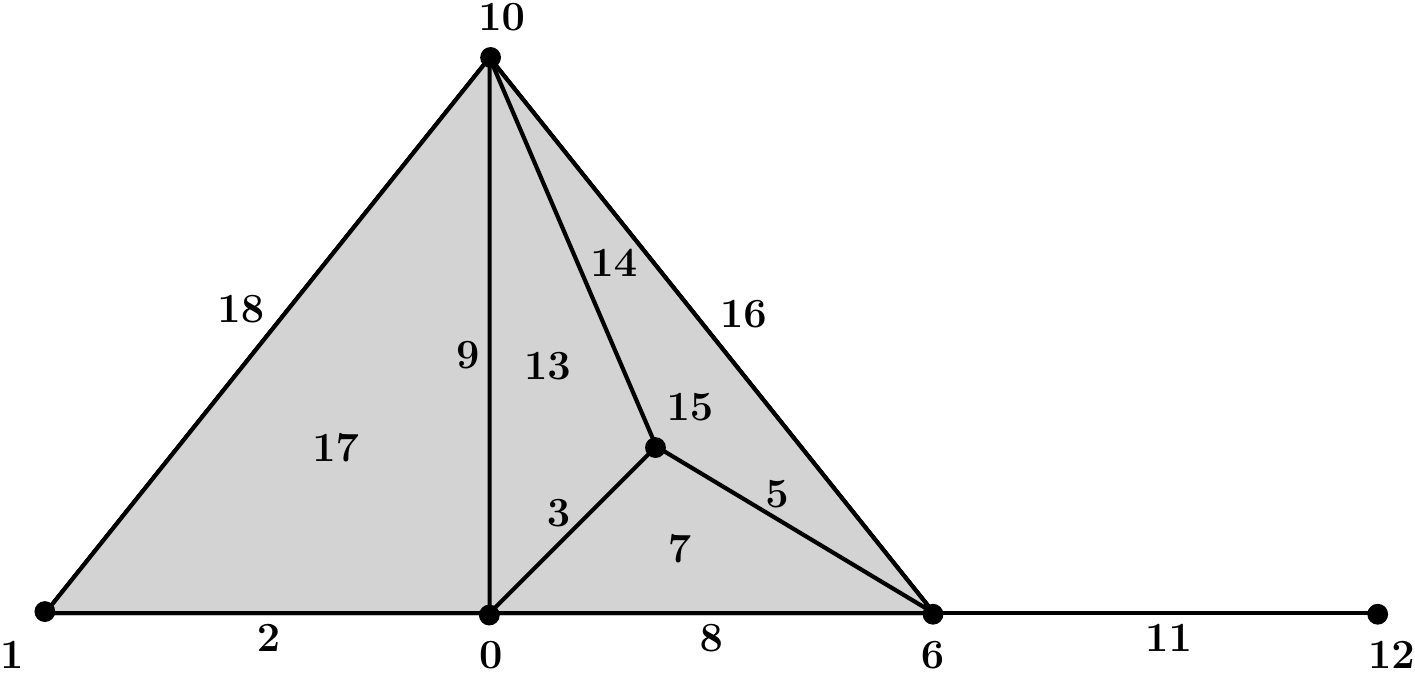}
\caption{The same 2-dimensional simplicial complex of
Figure~\ref{DMT_example1a} with another discrete Morse function.}
\label{DMT_example1b}
\end{center}
\end{figure}

\end{example}

Notice that the values that take both functions $f$ and $g$ on
some simplices coincide. It is interesting to point out that both
examples have the same induced gradient vector field and
consequently, contain the same critical simplices in the usual
Forman sense. This justifies the need to take into account
additional information in this new approach and thus a more general concept of critical object.

\begin{definition}\label{strong definition} Continuing with the notation used in Definition \ref{min outside}, define $l_v$ as the largest regular value in $f(\mathrm{St}(v,u))$ such that
\begin{itemize}
  \item $f(uv)\leq l_v\leq m_v$
  \item every maximal regular simplex of $K(l_v)\cap \mathrm{St}(v,u)$ contains the vertex $u$.
\end{itemize}

\end{definition}

\begin{definition}
Under the notation of Definition \ref{min outside}, the
\emph{strong collapse of $v$ under $f$},\linebreak denoted
$S^f_v$, is given by $S^{f}_v:=\{ (\sigma, \tau)\in V_f :
f(uv)\leq f(\tau)\leq l_v\}$ and the interval
$I(S^f_v)=[f(uv),l_v]$ is called the \emph{strong interval} of
$S_v$.

The elements of the set $C(V_f):=V_f-\bigcup S^f_v$ are the
\emph{critical pairs} of $f$ while each element in $\bigcup
S_v$ is a \emph{regular pair} of $f$.  If $(\sigma, \tau)$ is a
critical pair, the value $f(\tau)$ is a \emph{critical value} of
$f$.

A \emph{critical object} is either a critical simplex (in the
standard Forman sense) or a critical pair. The set of all critical
objects of $f$ is denoted by $\scrit(f)$. In order to avoid
confusion, we call the images of all critical objects (either in
the Forman sense or from a critical pair) \emph{strong critical
values}.
\end{definition}

\begin{remark}

It is worthwhile to mention that critical pairs are
detecting when a standard collapse has been made. Notice that it may happen either due to combinatorial
reasons (e.g. the non-existence of dominated vertices in the
corresponding subcomplex) or to a bad choice of the values of the
discrete Morse function (i.e. noise). This second option induces a bad ordering in
the way the standard collapses are made inside a potential strong one. The
key idea is that every elementary strong collapse should ideally
be made as a uninterrupted sequence of standard collapses. So
every time it is not made in this way, a new critical object
appears.
\end{remark}

\begin{example}
Define $f$ and $g$ as in Examples \ref{cutoff illustration_a} and
\ref{cutoff illustration_b}. We see that $c^f_v=14$ while
$l_v^g=10$.  Hence (by abuse of notation), $(16,15)$ is a critical
pair under $f$ while $(14,13)$, $(16,15)$ and $(18,17)$ are
critical pairs under $g$. Notice that the strong intervals are
$I(S^f_v)=[9,14]$ and $I(S^g_v)=[9,10]$, respectively.
\end{example}

\begin{example}\label{arg exam}
To further illustrate Definition \ref{strong definition}, we shall
consider two different discrete Morse functions defined on a
collapsible but non-strongly collapsible triangulation of the
$2$-disc. Notice that both of them have only a single critical
value (in the Forman sense), but very different numbers of
critical objects, due to a bad election in the ordering of normal
collapses of $g$.

Let $f$ be given in Figure~\ref{DMT_example2a}.

\begin{figure}[htbp]
\begin{center}
\includegraphics[width=.7\linewidth]{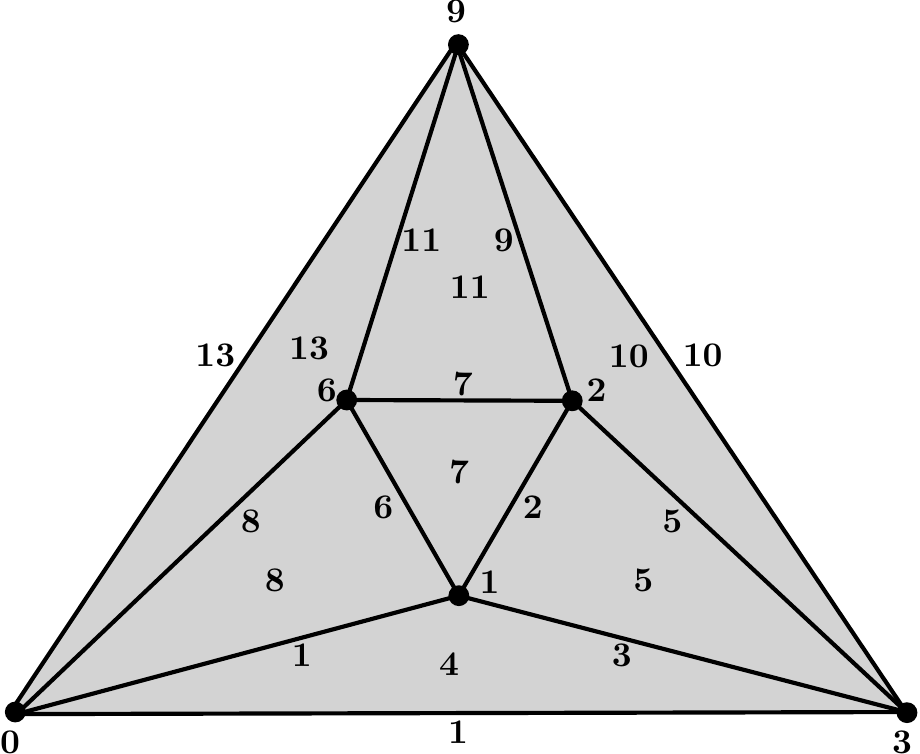}
\caption{A discrete Morse function defined on a non-strongly
collapsible triangulation of the $2$-disc.} \label{DMT_example2a}
\end{center}
\end{figure}

By abuse of notation, we refer to the simplices by their labeling
under the discrete Morse function (the fact that pairs in $V_f$
are given the same label should not cause confusion).  For each of
the pairs $(9,9), (6,6), (3,3),(2,2),(1,1)\in V_f$, we have the
corresponding values $l_9=11, l_6=8, l_3=5, l_2=2$, and $l_1=1$.
The corresponding strong collapses under the indicated vertices
are given by $$
\begin{array}{ccl}
  S^f_9 & = & \{ (9,9), (10,10), (11,11)\},\\
  S^f_6 & = & \{(6,6),(7,7), (8,8)\}, \\
  S^f_3 & = & \{(3,3), (4,4), (5,5)\}, \\
  S^f_2 & = & \{(2,2)\} \mbox{ and }\\
  S^f_1 & = & \{(1,1)\}\,.
\end{array}$$
Thus we obtain the following strong intervals: $$\begin{array}{c}
                                                   I(S^f_9)=[9,11], I(S^f_6)=[6,8], I(S^f_3)=[3,5], I(S^f_2)=[2,2]=\{ 2 \} \\
                                                   \mbox{ and } I(S^f_1)=[1,1]=\{ 1 \}\,.
                                                 \end{array}$$
Hence there is a single critical pair, namely, $(13,13)$, so that $\scrit(f)=\{0, (13,13)\}$.

It is interesting to point out that this discrete Morse function
can be considered as optimal in the sense that it minimizes the
number of critical objects, as we will see in Theorem \ref{sLS
theorem}.

Now let $g$ be the discrete Morse function given in
Figure~\ref{DMT_example2b}.

\begin{figure}[htbp]
\begin{center}
\includegraphics[width=.7\linewidth]{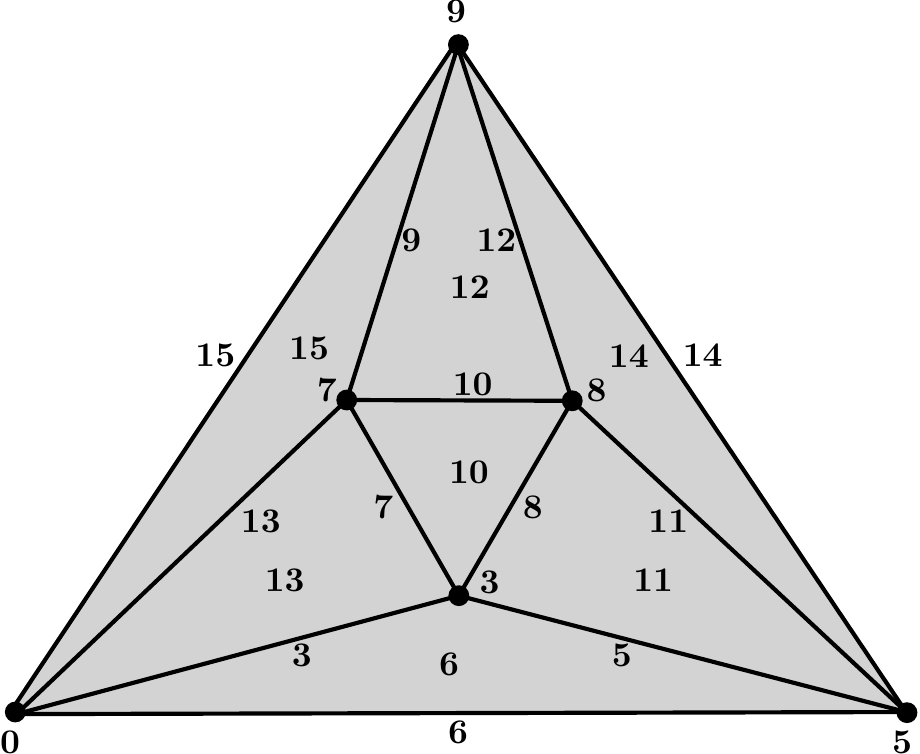}
\caption{A different discrete Morse function defined on a
non-strongly collapsible triangulation of the $2$-disc.}
\label{DMT_example2b}
\end{center}
\end{figure}

Now we consider the pairs $(9,9),(8,8),(7,7), (5,5), (3,3)\in V_g$
and obtain corresponding values $ l_9=10, l_8=8, l_7=7,l_5=6$ and
$l_3=3.$ The corresponding strong collapses are then given by
$$\begin{array}{ccl}
    S^g_9 & = & \{(9,9)\}, \\
    S^g_8 & = & \{(8,8)\}, \\
    S^g_7 & = & \{(7,7)\}, \\
    S^g_5 & = & \{(5,5),(6,6)\}\mbox{ and } \\
    S^g_3 & = & \{(3,3)\}\,.
  \end{array}$$
It follows that the strong intervals are: $$\begin{array}{c}
    I(S^g_9)=[9,9]=\{ 9 \}, I(S^g_8)=[8,8]=\{ 8 \}, I(S^g_7)=[7,7]=\{ 7 \}, I(S^g_5)=[5,6]\\
    \mbox{ and } I(S^g_3)=[3,3]=\{3 \}\,.
  \end{array}$$
We conclude that $\scrit(g)=\{ 0, (15,15), (13,13), (14,14), (10,10),(12,12), (11,11)\}$ for a total of $7$ critical
objects.

\end{example}

The following Theorem is the analogue of Forman's classical result
Theorem 3.3 of \cite{F-98} and also analogous to the classical
result in the smooth setting establishing homotopy equivalence of
sublevel sets by assuming non-existence of critical values.

\begin{theorem}\label{analagous collapse} Let $f$ be a discrete Morse function on $K$ and suppose that $f$ has no strong critical values on $[a,b]$. Then $K(b)\searrow\searrow K(a)$. %If
In particular, if $I(S^f_v)=[f(uv),l_v]$ is a strong interval for
some vertex $v\in K$, then $K(l_v)\searrow\searrow K(f(uv))$.
\end{theorem}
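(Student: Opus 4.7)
The plan is to prove the ``in particular'' clause first and then deduce the general statement by induction on the number of strong intervals into which $[a,b]$ decomposes. Since $f$ has no strong critical values on $[a,b]$, every $f$-value attained in $(a,b]$ comes from a regular pair and therefore belongs to some $S^f_{v'}$. Hence $[a,b]$ is traversed by finitely many strong intervals, and concatenating the elementary strong collapses produced by each interval yields $K(b)\searrow\searrow K(a)$.

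For the particular statement I would exhibit $K(l_v)\searrow\searrow K(f(uv))$ by an iterated deletion of dominated vertices, removing exactly those vertices of $K(l_v)$ which do not already belong to $K(f(uv))$. Fix such a vertex $w$: by definition $w$ is not a face of any simplex of value $\le f(uv)$, so every simplex of $K$ that contains $w$ and lies in $K(l_v)$ has $f$-value in $(f(uv),l_v]$. The bound $l_v\le m_v$ ensures, on the one hand, that no simplex of value in this range lies outside $\mathrm{St}(v,u)$ and, on the other hand, that no such simplex is Forman-critical. In particular $w\in\mathrm{St}(v,u)$, and every simplex of $K(l_v)$ containing $w$ is a regular simplex of $K(l_v)\cap\mathrm{St}(v,u)$.

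The defining property of $l_v$ --- that every maximal regular simplex of $K(l_v)\cap\mathrm{St}(v,u)$ contains $u$ --- therefore forces every maximal simplex of the current complex containing $w$ to contain $u$ as well, so $w$ is dominated by $u$ and may be removed by an elementary strong collapse. Iterating, after finitely many steps all vertices of $K(l_v)\setminus K(f(uv))$ are removed together with their open stars, and what remains is precisely $K(f(uv))$: no simplex at or below level $f(uv)$ is touched, while every simplex of value in $(f(uv),l_v]$ contains one of the removed vertices and so disappears.

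The chief obstacle is verifying that domination is preserved through the whole sequence of intermediate complexes, rather than only in $K(l_v)$ itself: once some vertices have been deleted one must still be able to identify another dominated vertex in the smaller complex. This should follow because deleting a dominated vertex merely shrinks the family of maximal regular simplices of $K(l_v)\cap\mathrm{St}(v,u)$ without introducing any new ones, but pinning it down cleanly requires a careful use of both branches of the definition of $m_v$, namely the simplices in $K-\mathrm{St}(v,u)$ and the Forman-critical simplices inside $\mathrm{St}(v,u)$. Once the particular statement is in place, the inductive passage to the general statement is essentially formal.
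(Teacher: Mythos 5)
Your proposal is correct and follows essentially the same route as the paper: reduce $[a,b]$ to a concatenation of single strong intervals, observe that $K(a)=K(f(uv))$ and $K(l_v)=K(b)$ for each, and then collapse $K(l_v)$ onto $K(f(uv))$ using the fact that everything added lies in $\mathrm{St}(v,u)$ and is dominated by $u$ (the paper phrases this as $K(l_v)-K(f(uv)-\epsilon)$ being an open cone with apex $u$). Your explicit vertex-by-vertex deletion, and your flagged concern about domination persisting through the intermediate complexes, is just a more detailed unpacking of that cone observation and resolves in the standard way, since removing a vertex $w\neq u$ from the maximal simplices cannot destroy the property that they all contain $u$.
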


\begin{proof} Let us consider the interval $[a,b]$ such that it does not contain any strong critical value.
If additionally this interval does not contain any strong interval, then we conclude that $K(l_1)=K(l_2)$.
Hence, assume that $[a, b]$ contains strong intervals. We may
then partition $[a,b]$ into subintervals such that each one
of them contains exactly one strong interval. Let us suppose
$I(S_v)=[f(uv), l_v]\subseteq [a, b]$ is the unique strong
interval in $[a, b]$. Again, since $f$ does not take values
in $(a, f(uv))$  or $(l_v,b)$, it follows that
$K(a)=K(f(uv))$ and $K(l_v)=K(b)$. Thus it suffices to show
that $K(l_v)\searrow\searrow K(f(uv))$. By definition of $l_v$,
$u$ is contained in every maximal regular simplex of $K(l_v) \cap
\mathrm{St}(v, u).$ Furthermore, since $l_v< m_v$, all the new
simplices that are attached from $K(f(uv))$ to $K(l_v)$ are
contained within $\mathrm{St}(v, u)$. Hence
$K(l_v)-K(f(uv)-\epsilon)$ is an open cone with apex $u$ and thus,
$K(l_v)\searrow\searrow K(f(uv))$.

\end{proof}

\section{Simplicial Lusternik--Schnirelmann category}\label{simplicial Lusternik--Schnirelmann category}

In this section, we recall the fundamental definitions and results found in \cite{F-M-V}.

\begin{definition} Let $K,L$ be simplicial complexes.  We say that two simplicial maps\linebreak $\phi, \psi \colon K\to L$ are \emph{contiguous},
denoted $\phi\sim_c \psi$, if for any simplex $\sigma \in K$, we
have that $\phi(\sigma)\cup \psi(\sigma)$ is a simplex of $L$.
Because this relation is reflexive and symmetric but not
transitive, we say that $\phi$ and $\psi$ are in the same
\emph{contiguity class}, denoted $\phi\sim \psi$, if there is a
sequence $\phi=\phi_0\sim_c \phi_1 \sim_c\ldots \sim_c \phi_n
=\psi$ of contiguous simplicial maps $\phi_i\colon K\to L, 0\leq i
\leq n$. A map $\phi \colon K \to L$ is a \emph{strong
equivalence} if there exists $\psi\colon  L \to K$ such that $\psi
\phi \sim \id_K$ and $\phi \psi \sim \id_L$. In this case, we say
that $K$ and $L$ are \emph{strongly equivalent}, denoted by $K
\sim L$.
\end{definition}

There is a nice link between strong equivalences and strong
collapses.

\begin{theorem}\cite[Cor. 2.12]{B-M-12} Two complexes $K$ and $L$ have the same strong homotopy type if and only if $K \sim L$.
\end{theorem}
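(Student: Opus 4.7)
The statement is a biconditional, so the plan is to handle the two directions separately. For the easy direction ($\Rightarrow$), it suffices to show that every elementary strong collapse induces a strong equivalence, since the contiguity-class relation $\sim$ is preserved under composition of simplicial maps and the inverse of a strong equivalence is a strong equivalence. So suppose $v$ is dominated by $v'$ in $K$, let $i\colon K-\{v\}\hookrightarrow K$ be the inclusion, and let $r\colon K\to K-\{v\}$ be the retraction that is the identity off $v$ and sends $v\mapsto v'$. I first check that $r$ is indeed simplicial: for any $\sigma\in K$ containing $v$, pick a maximal $\tau\geq\sigma$ containing $v$; by domination $v'\in\tau$, hence $r(\sigma)\subseteq\tau-\{v\}$ is a face of a simplex. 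Clearly $ri=\id_{K-\{v\}}$. For $ir\sim_c\id_K$: for any $\sigma\in K$ with $v\in\sigma$, $ir(\sigma)\cup\sigma=\sigma\cup\{v'\}$, which is a simplex because $\sigma$ sits inside a maximal simplex containing $v$ (and hence $v'$). Iterating over a sequence of strong collapses and expansions between $K$ and $L$ and composing the resulting strong equivalences yields $K\sim L$.

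For the harder direction ($\Leftarrow$), the tool I would use is the \emph{simplicial (non-Hausdorff) mapping cylinder} of a simplicial map $\phi\colon K\to L$: the complex $B(\phi)$ on vertex set $V(K)\sqcup V(L)$ whose simplices are the sets $S$ such that $S\cap V(K)\in K$, $S\cap V(L)\in L$, and $\phi(S\cap V(K))\cup(S\cap V(L))\in L$. The key observation is that every vertex $v\in V(K)\subseteq V(B(\phi))$ is dominated in $B(\phi)$ by $\phi(v)$, because any maximal simplex of $B(\phi)$ containing $v$ must, by the simplex condition and maximality, already contain $\phi(v)$. Strongly collapsing the $K$-vertices one by one therefore gives $B(\phi)\searrow\searrow L$, while $K$ sits in $B(\phi)$ as a subcomplex. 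Thus $B(\phi)$ provides a bridge of the same strong homotopy type between $K$ and $L$ \emph{controlled by the map $\phi$}.

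The next ingredient is to verify that contiguity is recorded geometrically by these cylinders. If $\phi\sim_c\psi\colon K\to L$, then $\phi(\sigma)\cup\psi(\sigma)\in L$ for every $\sigma\in K$, and this is exactly the compatibility needed for $B(\phi)$ and $B(\psi)$ to be joined inside a larger cylinder-type complex that strong-collapses to each. Iterating along a contiguity chain $\phi=\phi_0\sim_c\cdots\sim_c\phi_n=\psi$, one deduces that $\phi\sim\psi$ implies the cylinders $B(\phi)$ and $B(\psi)$ have the same strong homotopy type. Now given the hypotheses of the theorem, $\psi\phi\sim\id_K$ and $\phi\psi\sim\id_L$, so $B(\psi\phi)$ and $B(\id_K)$ have the same strong homotopy type, and likewise on the other side; since $B(\id_K)\searrow\searrow K$ and $B(\psi\phi)$ strong-collapses to $K$ via $\phi$ and then to $L$ via the second cylinder, one can chain these collapses and expansions together to produce a sequence of strong collapses and strong expansions from $K$ to $L$.

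The main obstacle, as expected, is the backward direction, and specifically the step asserting that contiguous maps yield strongly homotopy-equivalent mapping cylinders. Formalizing this requires carefully exhibiting the intermediate complex (e.g.\ the cylinder of the pair $(\phi,\psi)$, whose simplices are those $S$ with $S\cap V(K)\in K$, $S\cap V(L)\in L$, and both $\phi(S\cap V(K))\cup(S\cap V(L))$ and $\psi(S\cap V(K))\cup(S\cap V(L))$ lying in $L$) and checking that the $K$-vertices are still dominated in it, so that it strong-collapses to $L$ in two different ways. Once that domination bookkeeping is in place, everything else reduces to composing elementary strong collapses and their inverses, and the equivalence with $K\sim L$ follows as claimed.
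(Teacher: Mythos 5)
First, note that the paper does not prove this statement at all: it is imported verbatim as Corollary 2.12 of Barmak--Minian \cite{B-M-12}, so the only proof to compare against is the one in the source. Your forward direction is correct and is the standard argument (indeed the paper's definition of elementary strong collapse already records that $r i=\id_{K-\{v\}}$ and $ir\sim_c\id_K$); nothing to object to there.

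The backward direction, however, has a genuine gap, and it sits exactly where you flag ``the main obstacle.'' Two concrete problems. (1) Your domination argument in $B(\phi)$ only ever shows that a source vertex $v$ is dominated by its image $\phi(v)$, so the cylinder strong collapses onto the \emph{target} $L$; the claim that ``$B(\psi\phi)$ strong-collapses to $K$ via $\phi$'' (i.e.\ onto the source copy) is unjustified and false in general. For instance, if $K$ is two isolated vertices, $L$ a point and $\phi$ the constant map, then $B(\phi)$ is a path, which does not strong collapse onto the disconnected $K$. (2) The bridging step ``contiguous maps yield strongly homotopy-equivalent cylinders'' is, as stated, both unproved and vacuous: \emph{every} cylinder $B(\phi)$ strong collapses to $L$, so any two cylinders over $L$ trivially have the same strong homotopy type; what you would actually need is a chain of collapses and expansions relating the copy of $K$ in one to the copy of $K$ in the other. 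Moreover the domination bookkeeping in your proposed intermediate complex $B(\phi)\cap B(\psi)$ fails: to absorb $\phi(v)$ into a maximal simplex $S$ you must check that $\psi(S\cap V(K))\cup(S\cap V(L))\cup\{\phi(v)\}$ is a simplex of $L$, and this does not follow from contiguity of $\phi$ and $\psi$ together with the two defining conditions on $S$. Nowhere in the sketch is the two-sided invertibility of $\phi$ actually used to produce a collapse toward $K$, so the argument cannot close.

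For the record, the proof in \cite{B-M-12} takes a different and shorter route, via \emph{cores}: call a complex minimal if it has no dominated vertex; every finite complex strong collapses to a minimal subcomplex. The key rigidity lemma is that on a minimal complex any simplicial map contiguous to the identity \emph{equals} the identity (if $\sigma$ is a maximal simplex containing $v$, then $\phi(\sigma)\cup\sigma=\sigma$ by maximality, so $\phi(v)\in\sigma$; hence $\phi(v)$ would dominate $v$ unless $\phi(v)=v$). Propagating this along a contiguity chain shows that a strong equivalence between minimal complexes is an isomorphism, so $K\sim L$ forces the cores of $K$ and $L$ to be isomorphic, and $K\searrow\searrow \mathrm{core}(K)\cong \mathrm{core}(L)$ followed by the strong expansion back to $L$ gives the same strong homotopy type. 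Any repair of your cylinder approach would still need an analogue of this rigidity lemma; I would recommend switching to the core argument.
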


\begin{definition}\cite{F-M-V}\label{scat definition} Let K be a simplicial complex. We say that the subcomplex $U \subseteq K$ is
\emph{categorical} in $K$ if there exists a vertex $v \in K$
such that the inclusion $i\colon  U \to K$ and the constant map
$c_v \colon U \to K$ are in the same contiguity class.  The
\emph{simplicial L--S category}, denoted \emph{$\scat (K)$}, of
the simplicial complex $K$, is the least integer $m \geq 0$ such
that $K$ can be covered by $m + 1$ categorical subcomplexes.
\end{definition}

One of the basic results of classic Lusternik-Schnirelmann theory
states that the L--S category is homotopy invariant. Next result shows
that simplicial L--S category satisfies the analogous property in
the discrete setting.

\begin{theorem}\label{FMV}\cite[Theorem 3.4]{F-M-V} Let $K \sim L$ be two strongly equivalent complexes. Then $\scat(K) = \scat(L)$.
\end{theorem}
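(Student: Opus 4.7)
The plan is to prove $\scat(K)\leq \scat(L)$; the reverse inequality then follows by symmetry of the strong equivalence relation, giving equality. So suppose $\scat(L)=m$ and fix a categorical cover $V_0,\dots,V_m$ of $L$, meaning each inclusion $\iota_i\colon V_i\hookrightarrow L$ satisfies $\iota_i\sim c_{w_i}$ for some vertex $w_i\in L$. Let $\phi\colon K\to L$ and $\psi\colon L\to K$ be simplicial maps realizing $K\sim L$, so that $\psi\phi\sim \id_K$ and $\phi\psi\sim \id_L$.

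The natural candidate for a categorical cover of $K$ is $U_i:=\phi^{-1}(V_i)$. First I would check that each $U_i$ is actually a subcomplex of $K$: if $\sigma\in U_i$ and $\tau\leq \sigma$, then $\phi(\tau)\leq \phi(\sigma)\in V_i$, and since $V_i$ is a simplicial subcomplex, $\phi(\tau)\in V_i$, so $\tau\in U_i$. Next, the $U_i$ cover $K$: for any simplex $\sigma\in K$ the simplex $\phi(\sigma)$ lies in some $V_i$, hence $\sigma\in U_i$.

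The heart of the argument is showing each $U_i$ is categorical in $K$. For this I would rely on two elementary stability properties of the contiguity relation, which I would verify in a short preliminary lemma:
\begin{enumerate}
\item[(i)] If $\alpha\sim_c \beta\colon X\to Y$ and $\gamma\colon Y\to Z$, $\delta\colon W\to X$ are simplicial, then $\gamma\alpha\sim_c\gamma\beta$ and $\alpha\delta\sim_c\beta\delta$, because $\gamma(\alpha(\sigma)\cup\beta(\sigma))$ is a simplex of $Z$ containing $\gamma\alpha(\sigma)\cup\gamma\beta(\sigma)$, and similarly on the right.
\item[(ii)] Restriction to a subcomplex preserves $\sim_c$, hence $\sim$.
\end{enumerate}
Using these, write $j_i\colon U_i\hookrightarrow K$ for the inclusion. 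Then
$$j_i=\id_K\circ j_i\sim (\psi\phi)\circ j_i=\psi\circ\iota_i\circ(\phi\!\restriction_{U_i}),$$
where I use that $\phi\!\restriction_{U_i}$ factors through $V_i$ by construction of $U_i$. Applying (i) to $\iota_i\sim c_{w_i}$, first postcomposing with $\psi$ and then precomposing with $\phi\!\restriction_{U_i}$, gives
$$\psi\circ\iota_i\circ(\phi\!\restriction_{U_i})\sim \psi\circ c_{w_i}\circ(\phi\!\restriction_{U_i})=c_{\psi(w_i)}\colon U_i\to K.$$
Thus $j_i\sim c_{\psi(w_i)}$ and $U_i$ is categorical in $K$. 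This yields a cover of $K$ by $m+1$ categorical subcomplexes, so $\scat(K)\leq m=\scat(L)$.

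The main obstacle, such as it is, lies in being careful with the contiguity calculus: $\sim_c$ is not transitive, so one cannot simply chain contiguities without invoking $\sim$, and one must justify that composition on either side with a simplicial map preserves $\sim_c$ rather than just $\sim$. Once the auxiliary lemma in (i)--(ii) is in place, the rest of the argument is a routine diagram chase; the symmetric inequality $\scat(L)\leq \scat(K)$ is obtained by interchanging the roles of $(\phi,K)$ and $(\psi,L)$.
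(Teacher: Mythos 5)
Your argument is correct; note that the paper itself gives no proof of this statement, since it is quoted from \cite[Theorem 3.4]{F-M-V}. Your pullback-of-covers argument (taking $U_i=\phi^{-1}(V_i)$ and using that contiguity is preserved by pre- and post-composition with simplicial maps) is essentially the proof given in that reference, and all the delicate points --- that the $U_i$ are subcomplexes, that $\sim_c$ is only generated into an equivalence relation $\sim$, and that the constant map lands at the vertex $\psi(w_i)\in K$ --- are handled properly.
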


We refer the interested reader to the papers \cite{F-M-V,F-M-M-V} for a
detailed study of this topic.

\subsection{Simplicial Lusternik--Schnirelmann theorem}

Our main result is the following simplicial version of the
Lusternik-Schnirelmann Theorem:

\begin{theorem}\label{sLS theorem} Let $f\colon K\to \RR$ be a discrete Morse function.  Then
$$\scat(K)+1\leq \sharp(\scrit(f))\,.$$
\end{theorem}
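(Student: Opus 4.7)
My plan is to exhibit a categorical cover of $K$ of size $\sharp(\scrit(f))$, yielding $\scat(K)\leq \sharp(\scrit(f))-1$. Let $c_1<c_2<\cdots<c_m$ be the distinct strong critical values of $f$, let $k_i$ be the number of critical objects at level $c_i$ (so $\sum_{i=1}^{m} k_i=\sharp(\scrit(f))$), and set $L_i:=K(c_i)$, with $L_0:=\emptyset$ and the convention $\scat(L_0):=-1$. Since $[c_m+\varepsilon, f_{\max}]$ contains no strong critical values, Theorem~\ref{analagous collapse} yields $K\searrow\searrow L_m$, and Theorem~\ref{FMV} gives $\scat(K)=\scat(L_m)$.

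The heart of the argument is the inductive inequality $\scat(L_i)\leq\scat(L_{i-1})+k_i$. For $\varepsilon>0$ small, the interval $[c_{i-1}+\varepsilon, c_i-\varepsilon]$ contains no strong critical values, so Theorem~\ref{analagous collapse} gives $K(c_i-\varepsilon)\searrow\searrow L_{i-1}$; by Theorem~\ref{FMV}, $K(c_i-\varepsilon)$ admits a categorical cover of size $\scat(L_{i-1})+1$. The central geometric claim is that
\[
L_i \;=\; K(c_i-\varepsilon)\,\cup\,\bar{\tau}_{i,1}\,\cup\,\cdots\,\cup\,\bar{\tau}_{i,k_i},
\]
where $\bar{\tau}_{i,j}$ denotes the closed simplex $\bar{\sigma}$ when the critical object $\omega_{i,j}$ is a Forman critical simplex $\sigma$, and the closed simplex $\bar{\tau}$ when $\omega_{i,j}=(\sigma,\tau)$ is a critical pair. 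Each $\bar{\tau}_{i,j}$, being a closed simplex, is strongly collapsible, and its inclusion into $L_i$ is directly contiguous to the constant map at any of its vertices $v$ (since $\mu\cup\{v\}$ is a face of the top simplex whenever $\mu$ is), so $\bar{\tau}_{i,j}$ is categorical in $L_i$. Adjoining these $k_i$ closed simplices to the cover of $K(c_i-\varepsilon)$ produces a categorical cover of $L_i$ of size $\scat(L_{i-1})+1+k_i$, which proves the inductive inequality.

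Telescoping gives $\scat(L_m)\leq -1+\sum_{i=1}^{m}k_i=\sharp(\scrit(f))-1$, and combining with $\scat(K)=\scat(L_m)$ finishes the proof. The principal obstacle is justifying the geometric claim, namely that $L_i\setminus K(c_i-\varepsilon)$ is contained in the union of the closed simplices of the critical objects at level $c_i$. This is precisely where the notion of critical object (as opposed to Forman's critical simplex) matters: one must verify, using conditions (M1)--(M2) together with the fact that every regular pair lies in some strong collapse $S^f_v$, that the only simplices first appearing at level $c_i$ are the Forman critical simplices themselves and, for each critical pair $(\sigma,\tau)$, the two simplices $\sigma$ and $\tau$, all lying inside a single closed top simplex.
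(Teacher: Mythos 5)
Your route is genuinely different from the paper's. The paper proceeds ``from above'': it sets $c_n:=\min\{a\in\RR:\scat(K(a))\geq n-1\}$, uses Theorems \ref{analagous collapse} and \ref{FMV} to show each such minimax level must be a strong critical value, and then counts critical objects by induction. You instead build an explicit categorical cover and prove subadditivity across each critical level. Your outer architecture is fine: a subcomplex categorical in $K(c_i-\varepsilon)$ stays categorical in $L_i$ (post-compose the contiguity chain with the inclusion), a closed simplex is categorical by the direct contiguity you give, the telescoping is correct, and the base case works because the minimum of $f$ is necessarily attained at Forman-critical vertices, so $K(c_1-\varepsilon)=\emptyset$. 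If your central geometric claim held, this would be a clean and arguably more informative proof, since it produces an actual cover.

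The gap is that the geometric claim is false as stated, and the verification you sketch cannot succeed: the paper's discrete Morse functions need not be injective, so a \emph{regular} pair can first appear at a strong critical level. Concretely, let $K$ have vertices $u,v,a,b,w$ and edges $uv,ab$, with $f(u)=0$, $f(a)=0.5$, $f(b)=0.6$, $f(uv)=f(ab)=1$, $f(v)=2$, $f(w)=3$. Then $V_f=\{(v,uv)\}$, one computes $m_v=3$ and $l_v=2$, so $(v,uv)$ is a regular pair with strong interval $[1,2]$, while $ab$ is a Forman-critical edge with $f(ab)=1$. At the strong critical value $c=1$ the new simplices include $v$ and $uv$, which do not lie in $K(1-\varepsilon)\cup\overline{ab}$, so $L_i\neq K(c_i-\varepsilon)\cup\bigcup_j\overline{\tau}_{i,j}$. (One can show a critical pair's value never lies in a strong interval, and a Forman-critical value never lies in $(f(uv),l_v]$; the only possible coincidence is with the left endpoint $f(uv)$, which is exactly the case your argument misses.) To repair the proof you would need to show in addition that $L_i$ strongly collapses onto $K(c_i-\varepsilon)\cup\bigcup_j\overline{\tau}_{i,j}$ by collapsing away the regular pairs whose value equals $c_i$ --- e.g., proving that such a $v$ is dominated by $u$ in $K(f(uv))$ via Forman's lemma on gradient pairs --- which is a nontrivial refinement of Theorem \ref{analagous collapse} rather than a consequence of (M1)--(M2) alone. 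The paper's counting argument sidesteps this issue by never decomposing a critical level.
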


\begin{proof}
For any natural number $n$, define $c_n :=\min\{a\in \RR :
\scat(K(a))\geq n-1\}$. We claim that $c_n$ is a strong critical
value of $f$.  If $c_n$ is a regular value, then it is either
contained in a strong interval or it is not.  If $c_n$ is
contained in a strong interval $I(c_v)$, then by Theorem
\ref{analagous collapse}, we have $\scat(K(c_v))=\scat(K(c_n))$,
contradicting the minimality of $c_n$.  Otherwise, $c_n$ is
outside a strong interval. Then, by Theorem \ref{analagous
collapse} there exists $\epsilon >0$ such that
$K(c_n)\searrow\searrow K(c_n-\epsilon)$. By Theorem \ref{FMV},
$\scat(K(c_n))=\scat(K(c_n-\epsilon))$. But $c_n>c_n-\epsilon$ and
$c_n$ was the minimum value such that $\scat(K(c_n))=n-1$, which
is a contradiction.  Thus each $c_n$ is a strong critical value of
$f$.

\smallskip

We now prove by induction on $n$ that $K(c_n)$ must contain at
least $n$ critical objects.  By the well-ordering principle, the
set $\im(f)$ has a minimum, say $f(v)=0$ for some $0$-simplex
$v\in K$.  For $n=1$, $c_1=0$ so that $K(c_1)$ contains $1$
critical object.  For the inductive hypothesis, suppose that
$K(c_n)$ contains at least $n$ strong critical objects.  Since all
the strong critical values of $f$ are distinct, $c_n<c_{n+1}$ so
that there is at least one new critical object in
$f^{-1}(c_{n+1})$. Thus $K(c_{n+1})$ contains at least $n+1$
critical objects.  Hence if $c_1<c_2<\ldots < c_{\scat(K)+1}$ are
the critical objects, then $K(c_{\scat(K)+1})\subseteq K$ contains
at least $\scat(K)+1$ critical objects.  Thus $\scat(K)+1\leq
\sharp(\scrit(f))$.
\end{proof}

\begin{example}\label{LS equality}
We give an example where the upper bound of Theorem \ref{sLS
theorem} is attained. Let $A$ be the several times considered
non-strongly collapsible triangulation of the $2$-disc with the
discrete Morse function $f$ given in Example \ref{arg exam}.  This
satisfies $\sharp(\scrit(f))=2$. Since $A$ has no dominating
vertex, $\scat(A)>0$, whence $\scat(A)+1=\sharp(\scrit(f))=2$.\\

Notice that just adding one simplex to $A$, the above equality
turns into a strict inequality and thus, the number of critical
objects may increase while the simplicial category keeps the same.
To see this, let us consider $A'$ as the clique complex of $A$,
that is, we add to $A$ the triangle generated by its three
bounding vertices. This is a simplicial $2$-sphere, so by means of
Morse inequalities, it follows that every discrete Morse function
$f$ defined on $A'$ has at least two critical simplices: one
critical vertex (global minimum) and a critical triangle (global
maximum). In addition, since $A'$ is a simplicial $2$-sphere, then
it does not contain any dominated vertex. Moreover, after removing
the critical triangle (and hence obtaining $A$) no new dominated
vertices appear, so at least one critical pair arises in order to
collapse $A$ to a subcomplex containing dominated vertices. Hence,
we conclude that that any discrete Morse function $f\colon A' \to
\RR$ must have at least $3$ critical objects, which could be
considered optimal in the sense that it minimizes the number of
critical objects. Furthermore, it is easy to cover $A'$ with $2$
categorical sets so that $\scat(A')=1$. Hence $\scat(A')+1=2<3\leq
\sharp(\scrit(f))$.
\end{example}

\bibliographystyle{amcjoucc}
%\bibliography{amcexample}

\bigskip

\address{
\noindent {\sc D.~Fern\'andez-Ternero}.
\\Dpto. de Geometr\'{\i}a y Topolog\'{\i}a, Universidad de Sevilla, Spain.\\}
\email{desamfer@us.es}

\medskip

\address{
\noindent {\sc E.~Mac\'ias-Virg\'os}.
\\{Dpto. de Matem\'aticas,} Universidade de San\-tia\-go de Compostela, Spain.\\}
\email{quique.macias@usc.es}

\medskip

\address{
\noindent {\sc N.~A.~Scoville}.
\\Mathematics and Computer Science Ursinus College, 610 E Main Street, Collegeville PA
19426, USA\\}
\email{nscoville@ursinus.edu}

\medskip

\address{
\noindent {\sc J.A.~Vilches}.
\\Dpto. de Geometr\'{\i}a y Topolog\'{\i}a, Universidad de Sevilla, Spain.\\}
\email{vilches@us.es}

\end{document}